\documentclass[a4paper,11pt]{amsart}
\usepackage{amsfonts,amsthm,amsmath,amssymb,graphicx, overpic}
\theoremstyle{plain}
\newtheorem{lem}{Lemma}[section]

\newtheorem{thm}[lem]{Theorem}

\theoremstyle{definition}

\theoremstyle{remark}

\numberwithin{equation}{section}

\newcommand{\N}{\mathbb N}

\renewcommand{\epsilon}{\varepsilon}

\usepackage[usenames]{color}
\title[Subexponential sums of partial quotients]{Subexponentially increasing sums of partial quotients in
continued fraction expansions}
\date{}

\author{Lingmin Liao}
\address{Lingmin Liao\\LAMA UMR 8050, CNRS,
Universit\'e Paris-Est Cr\'eteil, 61 Avenue du
G\'en\'eral de Gaulle, 94010 Cr\'eteil Cedex, France}
\email{lingmin.liao@u-pec.fr}
\author{Micha\l\ Rams}
\address{Micha\l\ Rams\\Institute of Mathematics\\ Polish
Academy of Sciences\\ ul.
\'Sniadeckich 8, 00-656 Warszawa\\ Poland }
\email{rams@impan.pl}
\thanks{M.R. was partially supported by the MNiSW grant N201
607640 (Poland).}
\thanks{L.L. was partially supported by 12R03191A
- MUTADIS (France).}

\begin{document}
\begin{abstract}
We investigate from a multifractal analysis point of view the increasing rate of the sums of partial quotients $S_n(x)=\sum_{j=1}^n a_j(x)$, where $x=[a_1(x), a_2(x), \cdots ]$ is the continued fraction expansion of an irrational $x\in (0,1)$.  Precisely, for an increasing function $\varphi: \mathbb{N} \rightarrow \mathbb{N}$, one is interested in the Hausdorff dimension of the sets
\[
E_\varphi = \left\{x\in (0,1): \lim_{n\to\infty} \frac {S_n(x)} {\varphi(n)} =1\right\}.
\]
Several cases are solved by Iommi and Jordan, Wu and Xu, and Xu. We attack the remaining subexponential case $\exp(n^\gamma), \ \gamma \in [1/2, 1)$.  We show that when $\gamma \in [1/2, 1)$, $E_\varphi$ has Hausdorff dimension $1/2$. Thus, surprisingly, the dimension has a jump from $1$ to $1/2$ at $\varphi(n)=\exp(n^{1/2})$. In a similar way, the distribution of the largest partial quotient is also studied.
\end{abstract}

\maketitle
\def\thefootnote{}
\footnote{2010 {\it Mathematics Subject Classification}: Primary 11K50 Secondary 37E05, 28A80}
\def\thefootnote{\arabic{footnote}}

\section{Introduction}

 Each irrational
number $x\in [0,1)$ admits a unique infinite continued fraction
expansion of the form
\begin{eqnarray}\label{ff1}
x=\frac{\displaystyle 1}{\displaystyle a_1(x)+ \frac{\displaystyle
1}{\displaystyle a_2(x)+\frac{\displaystyle 1}{\displaystyle
a_3(x)+\ddots}}},
\end{eqnarray}
where the positive integers $a_n(x)$ are called the partial quotients of $x$. Usually,  (\ref{ff1}) is written as $x=[a_1,a_2,\cdots]$ for simplicity.
 The $n$-th finite truncation of (\ref{ff1}): $p_n(x)/q_n(x)=[a_1,\cdots, a_n]$ is called the $n$-th convergent of $x$.
The continued fraction expansions can be induced by the Gauss
transformation $T:[0,1)\to [0,1)$ defined by  $$ T(0):=0, \ \text{and } \
T(x):=\frac{1}{x} \ {\rm{(mod \ 1)}}, \ {\rm{for}}\ x\in (0,1).
$$ It is well known that $a_1(x)= \lfloor x^{-1}\rfloor$ ($\lfloor \cdot \rfloor $ stands for the integer part)
  and $a_n(x)=a_1(T^{n-1}(x))$ for $n\geq 2$.

For any $n\geq 1$, we denote by $S_n(x)=\sum_{j=1}^n a_j(x)$ the sum of the $n$ first partial quotients. It was proved by Khintchine \cite{Kh35} in 1935 that $S_n(x)/(n\log n)$ converges in measure (Lebesgue measure) to the constant $1/\log 2$. In 1988, Philipp \cite{Ph88} showed that there is no reasonable normalizing sequence $\varphi(n)$ such that a strong law of large numbers is satisfied, i.e., $S_n(x)/\varphi(n)$ will never converge to a positive constant almost surely.

From the point of view of multifractal analysis, one considers the Hausdorff dimension of the sets
\[
E_\varphi = \left\{x\in (0,1): \lim_{n\to\infty} \frac {S_n(x)} {\varphi(n)} =1\right\}.
\]
where $\varphi :\N\rightarrow \N$ is an increasing function.

The case $\varphi(n)=\gamma n$ with $\gamma\in [1,\infty)$ was studied by
Iommi and Jordan \cite{IJ}.  It is proved that with respect to $\gamma$, the Hausdorff dimension (denoted by $\dim_H$) of $E_\varphi$ is analytic, increasing from $0$ to $1$, and tends to $1$ when  $\gamma$ goes to infinity. In \cite{WX11}, Wu and Xu proved that if $\varphi(n)=n^{\gamma}$ with $\gamma \in (1,\infty)$ or $\varphi(n)=\exp(n^\gamma)$ with $\gamma\in (0,1/2)$, then $\dim_HE_\varphi=1$. Later, it was shown by Xu \cite{Xu}, that if $\varphi(n)=\exp(n)$ then $\dim_HE_\varphi=1/2$ and if $\varphi(n)=\exp(\gamma^n)$ with $\gamma>1$ then $\dim_HE_\varphi=1/(\gamma+1)$. The same proofs of \cite{Xu} also imply that for $\varphi(n)=\exp(n^\gamma)$ with $\gamma\in (1, \infty)$ the Hausdorff dimension $\dim_HE_\varphi$ stays at $1/2$. So, only the subexponentially increasing case: $\varphi(n)=\exp(n^\gamma), \gamma\in [1/2, 1)$ was left unknown.
In this paper, we fill this gap.

\begin{thm}\label{main}
Let $\varphi(n)=\exp(n^\gamma)$ with $\gamma\in [1/2, 1)$. Then
\[
\dim_H E_\varphi=\frac 12.
\]
\end{thm}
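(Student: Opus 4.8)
\smallskip\noindent\emph{Strategy.}
We prove the two bounds $\dim_H E_\varphi\le\tfrac12$ and $\dim_H E_\varphi\ge\tfrac12$ separately. Write $\varphi(n)=e^{n^\gamma}$, let $I_n=I_n(a_1,\dots,a_n)=\{y\in(0,1):a_i(y)=a_i\text{ for }1\le i\le n\}$ denote the cylinder of rank $n$, and recall the classical facts $|I_n|\asymp q_n^{-2}$ and $\prod_{j\le n}a_j\le q_n\le\prod_{j\le n}(a_j+1)$. The reason the answer is $\tfrac12$: asking $S_n(x)/\varphi(n)\to1$ \emph{for every} $n$ (not merely infinitely often) forces partial quotients of size $\approx e^{n^\gamma}$ to recur at a definite rate — once in every $\asymp n^{1-\gamma}$ steps, since after a jump $S$ is constant until the next jump while $\varphi$ keeps multiplying by a fixed factor over windows of length $\asymp n^{1-\gamma}$. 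One such large partial quotient $a_\tau\approx e^{\tau^\gamma}$ multiplies $q_\tau$ by $\approx e^{\tau^\gamma}$ and also multiplies the number of admissible cylinders by $\approx e^{\tau^\gamma}$, i.e. it contributes exactly ``dimension $\tfrac12$''; the free partial quotients in between carry entropy, and balancing the two gives the threshold $\gamma=\tfrac12$. (The weaker inclusion $E_\varphi\subseteq\{x:\tfrac1{n^\gamma}\log\max_{j\le n}a_j(x)\to1\}$ only gives $\dim_H E_\varphi\le1$, so the full strength of the hypothesis is needed.)

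\smallskip\noindent\emph{Upper bound.}
Fix $s>\tfrac12$. Since $E_\varphi=\bigcup_{N\ge1}E_\varphi^{(N)}$ with $E_\varphi^{(N)}=\{x:(1-\epsilon)\varphi(m)\le S_m(x)\le(1+\epsilon)\varphi(m)\text{ for all }m\ge N\}$, it is enough to prove $\mathcal H^s(E_\varphi^{(N)})=0$. Choose a small parameter $\theta>0$, set $\tau_k=\lceil(k\theta)^{1/\gamma}\rceil$ ($\tau_0=0$), $B_k=(\tau_{k-1},\tau_k]$ and $\ell_k=\tau_k-\tau_{k-1}$; then $\tau_k^\gamma=k\theta+o(1)$, $\varphi(\tau_k)\asymp e^{k\theta}$, and $\ell_k\sim\gamma^{-1}\theta^{1/\gamma}k^{1/\gamma-1}$ — and $1/\gamma-1\le1$ precisely because $\gamma\ge\tfrac12$, so $\ell_k=O(k)$. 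For $x\in E_\varphi^{(N)}$ and any block with $\tau_{k-1}\ge N$,
\[
\sum_{j\in B_k}a_j(x)=S_{\tau_k}(x)-S_{\tau_{k-1}}(x)\ \ge\ (1-\epsilon)\varphi(\tau_k)-(1+\epsilon)\varphi(\tau_{k-1})\ \ge\ c\,\varphi(\tau_k)
\]
with $c=c(\theta,\epsilon)>0$ once $\epsilon$ is small relative to $\theta$. Covering $E_\varphi^{(N)}$ by the rank-$\tau_K$ cylinders it meets, using $q_{\tau_K}\ge\prod_{k\le K}\prod_{j\in B_k}a_j$ and the fact that the per-block lower bounds impose no constraint across distinct blocks, we obtain
\[
\mathcal H^s\big(E_\varphi^{(N)}\big)\ \le\ C\,\liminf_{K\to\infty}\prod_{k\le K}Z_k(s),\qquad Z_k(s):=\sum_{\substack{(a_j)_{j\in B_k}\in\N^{\ell_k}\\ \sum_{j\in B_k}a_j\ge c\varphi(\tau_k)}}\ \prod_{j\in B_k}a_j^{-2s},
\]
with $Z_k(s)=\zeta(2s)^{\ell_k}$ for the finitely many blocks having $\tau_{k-1}<N$. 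Peeling off the largest partial quotient in a block (which exceeds $c\varphi(\tau_k)/\ell_k$) and bounding each remaining sum over $\N$ by $\zeta(2s)<\infty$ gives $Z_k(s)\lesssim_s\ell_k^{2s}\varphi(\tau_k)^{1-2s}\zeta(2s)^{\ell_k}$, hence
\[
\log Z_k(s)\ \le\ -(2s-1)\,k\theta\ +\ \ell_k\log\zeta(2s)\ +\ O(\log k).
\]
If $\gamma\in(\tfrac12,1)$ then $\ell_k=o(k)$ and the first term dominates for any $\theta>0$; if $\gamma=\tfrac12$ then $\ell_k\sim2\theta^2k$ and the first term still dominates once $\theta$ is so small that $2s-1>2\theta\log\zeta(2s)$. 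Either way $\log Z_k(s)\le-\delta k+O(\log k)$ for some $\delta>0$, so $\prod_{k\le K}Z_k(s)\to0$ and $\mathcal H^s(E_\varphi^{(N)})=0$. Letting $s\downarrow\tfrac12$ yields $\dim_H E_\varphi\le\tfrac12$.

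\smallskip\noindent\emph{Lower bound.}
Construct a Cantor set $F\subseteq E_\varphi$ by prescribing admissible digits. Put $\alpha_j=\lfloor\varphi(j)\rfloor-\lfloor\varphi(j-1)\rfloor$ (which is $\ge1$ for large $j$), $N_j=\lfloor\varphi(j)/j^2\rfloor$, and let $a_j$ range over $\{\alpha_j,\alpha_j+1,\dots,\alpha_j+N_j-1\}$ for large $j$ (say $a_j=1$ for small $j$). Then $S_m(x)$ lies within $O(1)+\sum_{j\le m}N_j$ of $\lfloor\varphi(m)\rfloor$, and $\sum_{j\le m}\varphi(j)/j^2\asymp m^{-1-\gamma}\varphi(m)=o(\varphi(m))$, so $S_m(x)/\varphi(m)\to1$ and $F\subseteq E_\varphi$. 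Let $\mu$ be the measure with $\mu(I_n(a_1,\dots,a_n))=\prod_{j\le n}N_j^{-1}$ on admissible words. Since $N_j\ll\alpha_j$, every admissible $a_j$ satisfies $\log a_j=j^\gamma+O(\log j)$, whence
\[
-\log\mu(I_n)=\sum_{j\le n}\log N_j=\frac{n^{\gamma+1}}{\gamma+1}(1+o(1)),\qquad -\log|I_n|=2\log q_n+O(1)=\frac{2n^{\gamma+1}}{\gamma+1}(1+o(1)).
\]
Because $\log a_{n+1}=o(\log q_n)$ we have $\log q_{n+1}/\log q_n\to1$, which controls the passage from cylinders to balls in the usual way (as in Good's theorem and in the arguments of Wang and Wu); the mass distribution principle then gives $\dim_H F\ge\liminf_n\frac{\log\mu(I_n)}{\log|I_n|}=\tfrac12$. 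Hence $\dim_H E_\varphi=\tfrac12$.

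\smallskip\noindent\emph{Main obstacle.}
The substance is the upper bound, and its crux is the per-block comparison between the entropy $\asymp\ell_k\log\zeta(2s)$ of the free partial quotients and the cost $\asymp(2s-1)k\theta$ of the forced large one; these balance exactly when $\ell_k\asymp k$, i.e. at $\gamma=\tfrac12$, while for $\gamma<\tfrac12$ the entropy wins (and $\dim_H E_\varphi=1$). The delicate points are: choosing $\theta$ small enough that counting still wins at $\gamma=\tfrac12$ while keeping $c(\theta,\epsilon)>0$; and verifying that retaining only the lower bounds $\sum_{j\in B_k}a_j\ge c\varphi(\tau_k)$ genuinely decouples the sum over admissible words into $\prod_k Z_k(s)$.
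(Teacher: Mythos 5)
Your proof is correct and follows essentially the same strategy as the paper's: the upper bound via a block decomposition at times $\tau_k$ where $\varphi$ has multiplied by a fixed factor $e^\theta$ (the paper's $n_k=k^{1/\gamma}$, resp.\ $n_k=k^2/L^2$ with $L=1/\theta$ at $\gamma=1/2$), a product bound on $\sum|I_{\tau_K}|^s$ over blocks, and the same entropy-versus-cost comparison; the lower bound via a Cantor set with digits in windows around $\varphi(j)-\varphi(j-1)$ and the mass distribution principle (the paper's Lemma \ref{lem:bigb}). The only notable difference is that you bound the per-block sum $Z_k(s)$ by peeling off the largest digit, a slightly more elementary substitute for the paper's inductive convolution estimate (Lemma \ref{estimation-zeta}) that uses only the lower bound on the block sum.
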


We also show that there exists a jump of the Hausdorff dimension of $E_\varphi$ between $\varphi(n)=\exp(n^{1/2})$ and slightly slower growing functions, for example $\varphi(n)=\exp({\sqrt{n}(\log n)^{-1}})$.
\begin{thm}\label{main-2}
Let $\varphi(n)=\exp({\sqrt{n}\cdot \psi (n)})$ be an increasing function with $\psi$ being a $\mathcal{C}^1$ positive function on $\mathbb{R}_+$ satisfying
\begin{align}\label{ass-jump}
\lim_{x\to \infty} \frac{\sup_{y\geq x} \psi(y)^2}{\psi(x)} =0 \quad \text{and} \quad \lim_{x\to \infty}{x\psi'(x) \over \psi(x)}=0.
\end{align}
 Then
\[
\dim_H E_\varphi=1.
\]
\end{thm}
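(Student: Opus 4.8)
The plan: only $\dim_H E_\varphi\ge 1$ needs proof. I would obtain it by producing, for each large integer $N$, a compact set $E^{(N)}\subseteq E_\varphi$ with $\dim_H E^{(N)}\ge t_N$, where $\Lambda_N:=\{x\in(0,1):a_j(x)\le N\text{ for all }j\}$ and $t_N:=\dim_H\Lambda_N$; since it is classical that $t_N\to1$ as $N\to\infty$, this gives $\dim_H E_\varphi\ge\sup_N t_N=1$. I will use the Gibbs/equilibrium measure $\mu_N$ on $\Lambda_N$ for $-t_N\log|T'|$, which satisfies $\mu_N(J)\asymp|J|^{t_N}$ for every cylinder $J$ of $\Lambda_N$, with constant depending only on $N$. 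I also record two consequences of \eqref{ass-jump}: since $\sup_{y\ge x}\psi(y)^2\ge\psi(x)^2$, the first condition forces $\psi(x)\to0$; and the second gives $|x\psi'(x)/\psi(x)|\le\tfrac14$ for large $x$, hence $\psi(x)\ge x^{-1/4}$ eventually, so $\log\varphi(n)=\sqrt n\,\psi(n)\ge n^{1/4}$ and $\varphi$ grows faster than every polynomial.

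Fix $N$, pick $n_1$ large (depending on $N$), and put $n_{k+1}:=n_k+\lceil\sqrt{n_k}\rceil$, so $m(n):=\#\{k:n_k\le n\}\asymp\sqrt n$. Since $(\log\varphi)'(t)=\frac{\psi(t)}{2\sqrt t}+\sqrt t\,\psi'(t)$ and $|t\psi'(t)/\psi(t)|\le\tfrac14$, one has $0\le(\log\varphi)'(t)\le\psi(t)/\sqrt t$ for $t$ large, whence $\log\varphi(n_{k+1})-\log\varphi(n_k)\le 2\sup_{t\ge n_k}\psi(t)\to0$, i.e.\ $\varphi(n_{k+1})/\varphi(n_k)\to1$. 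Let $E^{(N)}$ consist of those $x$ with $a_j(x)\in\{1,\dots,N\}$ for $j\notin\{n_k\}_k$ and, for each $k$, $a_{n_k}(x):=\big\lceil\varphi(n_k)-S_{n_k-1}(x)\big\rceil$ (a quantity depending only on $a_1(x),\dots,a_{n_k-1}(x)$). Because $S_{n_k-1}(x)\ge\varphi(n_{k-1})$ while $\varphi(n_k)-\varphi(n_{k-1})\ge\varphi(n_{k-1})\big(\log\varphi(n_k)-\log\varphi(n_{k-1})\big)\gg N\sqrt{n_k}$ (here the super-polynomial growth of $\varphi$ enters), the number $a_{n_k}(x)$ is always a positive integer, so $E^{(N)}$ is a nonempty compact set. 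By construction $S_{n_k}(x)\in[\varphi(n_k),\varphi(n_k)+1)$ for all $x\in E^{(N)}$, and for $n_k< n< n_{k+1}$ one has $\varphi(n_k)\le S_n(x)<\varphi(n_k)+1+N\lceil\sqrt{n_k}\rceil$ with $\varphi(n_k)\le\varphi(n)\le\varphi(n_{k+1})$; since $\varphi(n_k)$ is super-polynomial and $\varphi(n_{k+1})/\varphi(n_k)\to1$, this gives $S_n(x)/\varphi(n)\to1$ uniformly on $E^{(N)}$. Hence $E^{(N)}\subseteq E_\varphi$.

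To bound $\dim_H E^{(N)}$ from below, transport $\mu_N$ to a probability measure $\nu$ on $E^{(N)}$ through the bijection reading off the free partial quotients; thus a rank-$n$ cylinder $I_n$ of $E^{(N)}$ satisfies $\nu(I_n)=\mu_N(J_n)\asymp|J_n|^{t_N}$, where $J_n$ is the continued-fraction cylinder determined by the $n-m(n)$ free digits among $a_1,\dots,a_n$ (so $\nu$ does not split mass at the positions $n_k$). The point is that the big digits are cheap: from $a_{n_k}(x)\le\varphi(n_k)+1$, uniformly in $x\in E^{(N)}$,
\[
0\le\sum_{k:\,n_k\le n}\log a_{n_k}(x)\le\sum_{k=1}^{m(n)}\big(\sqrt{n_k}\,\psi(n_k)+O(1)\big)\le(1+o(1))\sqrt n\sum_{k=1}^{m(n)}\psi(n_k)=o(n),
\]
the last equality by a Ces\`aro argument since $m(n)\asymp\sqrt n$ and $\psi(n_k)\to0$. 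Together with the near-multiplicativity of continuants under concatenation — inserting a letter $a$ multiplies the continuant by a factor in $[a/2,4a]$, so $\log q_n=\log\widetilde q_n+\sum_{k:\,n_k\le n}\log a_{n_k}+O(m(n))$, with $\widetilde q_n$ the continuant of the free digits among $a_1,\dots,a_n$ — and the trivial $\log\widetilde q_n\asymp n$, this yields $\log|I_n(x)|=(1+o(1))\log|J_n(x)|$, hence
\[
\lim_{n\to\infty}\frac{\log\nu(I_n(x))}{\log|I_n(x)|}=t_N\qquad\text{for every }x\in E^{(N)}.
\]
Passing from cylinders to balls is routine except at the scales $r\in[|I_{n_k}(x)|,|I_{n_k-1}(x)|)$, where $\nu$ adds no mass, so $\nu(B(x,r))\asymp\nu(I_{n_k}(x))$; as $\log|I_{n_k-1}(x)|-\log|I_{n_k}(x)|=-2\log a_{n_k}(x)+O(1)=o(n_k)$, the ratio $\log\nu(B(x,r))/\log r$ still tends to $t_N$. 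Thus $\underline{\dim}_\nu(x)=t_N$ on $E^{(N)}$, and the mass distribution principle gives $\dim_H E^{(N)}\ge t_N$; letting $N\to\infty$ finishes the proof.

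The main obstacle is the dimension estimate of the last paragraph: one must show that partial quotients as enormous as $a_{n_k}\approx\varphi(n_k)=\exp(\sqrt{n_k}\,\psi(n_k))$ do not cost any dimension. This is exactly what \eqref{ass-jump} buys — it forces $\psi(n)\to0$, hence $\varphi(n)=\exp(o(\sqrt n))$, which makes $\sum_{n_k\le n}\log a_{n_k}=o(n)$ so that the Lyapunov exponent along a typical point of $E^{(N)}$ stays asymptotically equal to that on $\Lambda_N$. The borderline case $\psi\equiv1$, where $\varphi(n)=\exp(\sqrt n)$ and $\dim_H E_\varphi=\tfrac12$ by Theorem \ref{main}, shows this is sharp.
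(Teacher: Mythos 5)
Your proposal is correct, and while it shares the paper's overall skeleton --- insert forced huge partial quotients at a sparse set of positions $n_k$ so that $S_{n_k}\approx\varphi(n_k)$, keep all other digits bounded by $N$, and show the forced digits cost nothing --- it differs in two genuine ways from the argument actually given (which follows Section 4 of \cite{WX11}). First, the paper defines $n_k$ implicitly by $\varphi(n_k)\ge(1+\psi(k-1))\varphi(n_{k-1})$ and then must control $r(n)$ and $\sum_k\log a_{n_k}$ through the chain of estimates ending in $n\psi^2(n)/\psi(r(n))$, which is exactly where the first hypothesis of \eqref{ass-jump} is consumed; your explicit spacing $n_{k+1}=n_k+\lceil\sqrt{n_k}\rceil$ gives $m(n)\asymp\sqrt n$ directly, and your Ces\`aro bound $\sum_{n_k\le n}\log a_{n_k}\le\sqrt n\sum_{k\le m(n)}\psi(n_k)+O(\sqrt n)=o(n)$ uses only $\psi\to 0$, so your argument in fact proves the theorem under a formally weaker hypothesis (the first condition of \eqref{ass-jump} enters only through $\psi(x)\le\sup_{y\ge x}\psi(y)^2/\psi(x)\to 0$). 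Second, the paper converts the estimate $\frac1n\log(a_{n_1}\cdots a_{n_{r(n)}})\to 0$ into a $(1+\epsilon)^{-1}$-Lipschitz deletion map onto $E_M$ and quotes $\dim_H E_M\to 1$, whereas you push the Gibbs measure of $\Lambda_N$ forward and verify the local dimension directly; these are two packagings of the same quantitative fact (the forced digits perturb $\log q_n$ by $o(n)$ via near-multiplicativity of continuants), and both are legitimate. Your design of the forced digit $a_{n_k}=\lceil\varphi(n_k)-S_{n_k-1}\rceil$, which tracks $\varphi$ exactly rather than up to the factors $(1+\epsilon_k)$, is also fine since you check it is a positive integer using the superpolynomial growth $\log\varphi(n)\gtrsim n^{1/4}$ extracted from the second condition of \eqref{ass-jump}. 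The one place you wave your hands --- passing from cylinders to balls, where a ball at an intermediate scale near a forced position may clip a bounded number of neighbouring cylinders of comparable measure --- is treated at the same level of detail as the paper's own Lemma \ref{lem:biga} and its ``sketch proof'' of Theorem \ref{main-2}, and the key quantitative point (the scale jump at $n_k$ is $2\log a_{n_k}+O(1)=o(n_k)$) is correctly identified, so I do not regard it as a gap.
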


We remark that the assumption (\ref{ass-jump}) on the function $\psi$ says that $\psi$ decreases to $0$ slower than any polynomial.
We also remark that when $\psi$ is decreasing, then the first condition of (\ref{ass-jump}) is automatically satisfied.

Theorems \ref{main} and \ref{main-2} show that, surprisingly, there is a jump of the Hausdorff dimensions from $1$ to $1/2$ in the class $\varphi(n) = \exp(n^\gamma)$ at $\gamma = 1/2$ and that this jump cannot be easily removed by considering another class of functions. See Figure 1 for an illustration of the jump of the Hausdorff dimension.

\begin{figure}
\includegraphics[width=0.8\textwidth]{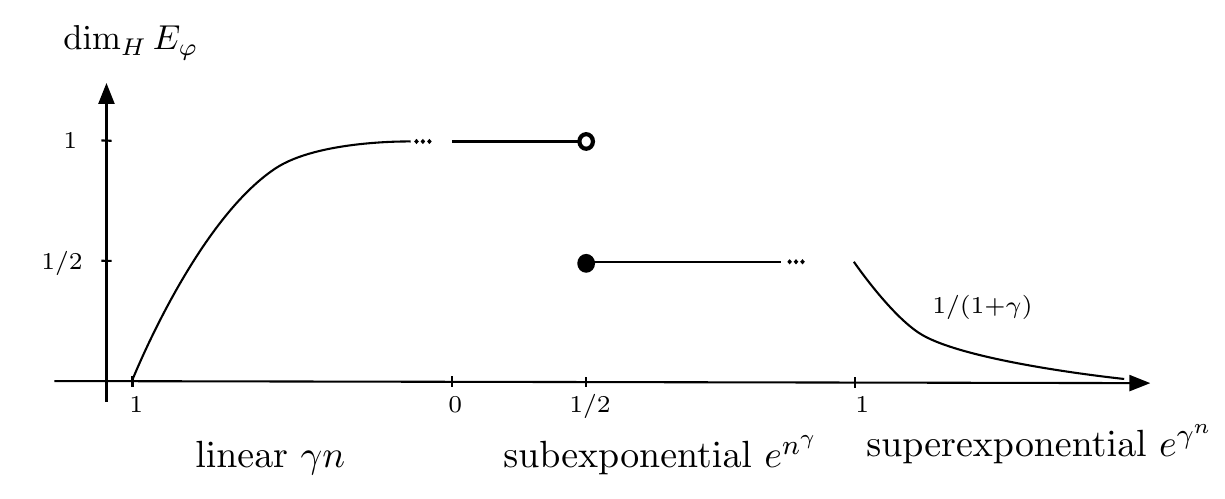}
\caption{$\dim_H E_{\varphi}$ for different $\varphi$.}
\end{figure}


\medskip
By the same method, we also prove some similar results on the distribution of the largest partial quotient in continued fraction expansions. For $x\in [0,1)\setminus \mathbb{Q}$, define
\[T_n(x) :=\max\{a_k(x): \ 1\leq k \leq n\}. \]
One is interested in the following lower limit: \[ T(x):=\liminf_{n\to\infty}\frac{T_n(x)\log\log n}{n}.  \]
It was conjectured by Erd\"os that almost surely $T(x)=1$.
However, it was proved by Philipp \cite{Ph75} that for almost all $x$, one has $T(x)=1/\log 2$. Recently, Wu and Xu \cite{WX09} showed that
\[
\forall \alpha\geq 0,\quad \dim_H\left\{x\in [0,1)\setminus \mathbb{Q} : \lim_{n\to\infty}\frac{T_n(x)\log\log n}{n}=\alpha \right\}=1.
\]
They also proved that if the denominator $n$ is replaced by any polynomial the same result holds. In this paper, we show the following theorem.
\begin{thm}\label{largest}
For all $\alpha >0$,
\[
F(\gamma, \alpha) = \left\{x\in [0,1)\setminus \mathbb{Q} : \lim_{n\to\infty}{T_n(x)}/{\exp({n^\gamma})}=\alpha \right\}
\]
satisfies
\[
\dim_H F(\gamma, \alpha) =
\begin{cases} 1 &\text{ if }\gamma \in (0,1/2)\\\frac 12&\text{ if }\gamma \in (1/2, \infty).\end{cases}
\]
\end{thm}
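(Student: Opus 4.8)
The plan is to follow the same scheme as in the proofs of Theorems~\ref{main} and~\ref{main-2}: a covering estimate for the upper bounds, and a Moran-type construction together with the mass distribution principle for the lower bounds. The elementary ingredients are the classical facts that the rank-$n$ cylinder $[a_1,\dots,a_n]\subset(0,1)$ has length $\asymp q_n^{-2}$ and that $q_n\ge a_1\cdots a_n$, so that the $s$-th power of its length is at most $(a_1\cdots a_n)^{-2s}$ for any $s>0$. The first step is a structural observation. If $x\in F(\gamma,\alpha)$, fix a small $\epsilon>0$; then for all large $n$ we have $a_n(x)\le T_n(x)\le(\alpha+\epsilon)\exp(n^\gamma)$ and $T_n(x)\ge(\alpha-\epsilon)\exp(n^\gamma)$. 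Comparing the two estimates one sees that a partial quotient situated at a position $j$ can keep $T_n$ above $(\alpha-\epsilon)\exp(n^\gamma)$ only for $n\le j+O(\epsilon\,j^{1-\gamma})$; consequently, writing $T_m\ge(\alpha-\epsilon)\exp(m^\gamma)$ and bounding from below the position of the responsible partial quotient, one obtains that $F(\gamma,\alpha)$ is contained, up to a countable union over a threshold index, in the set of $x$ for which every block $[m-\lceil C\epsilon\,m^{1-\gamma}\rceil,\,m]$ with $m$ large contains an index $k$ with $a_k(x)\ge(\alpha-\epsilon)\exp(m^\gamma)$.

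For the upper bound when $\gamma\in(1/2,\infty)$, I would cover this set by cylinders of a large fixed rank $N$, partition $(N_0,N]$ into $\asymp\epsilon^{-1}N^{\gamma}$ consecutive blocks of the above form with right endpoints $m_1<m_2<\cdots$ (so that $m_{i+1}^\gamma-m_i^\gamma\asymp\epsilon$), and bound the Hausdorff $s$-sum, for $s>1/2$, by
\[
\Big(\text{number of admissible placements of the distinguished positions}\Big)\cdot\prod_i\Big(\,\sum_{a\ge(\alpha-\epsilon)\exp(m_i^\gamma)}a^{-2s}\Big)\cdot\zeta(2s)^{N},
\]
using $\sum_{a\ge m}a^{-2s}\ll m^{1-2s}$ and $\zeta(2s)<\infty$. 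The number of placements is $\exp(O(\epsilon^{-1}N^{\gamma}\log N))$, while $\sum_i m_i^{\gamma}\asymp\epsilon^{-1}N^{2\gamma}$, so the middle product carries a factor $\exp(-(2s-1)\,c_\gamma\,\epsilon^{-1}N^{2\gamma})$. Since $2\gamma>1$, this dominates all the other factors as $N\to\infty$, so the $s$-sum tends to $0$ for every $s>1/2$, whence $\dim_H F(\gamma,\alpha)\le1/2$. When $\gamma>1$ the argument is even simpler: every partial quotient is eventually forced into an interval $[(\alpha-\epsilon)\exp(n^\gamma),(\alpha+\epsilon)\exp(n^\gamma)]$, and the $s$-sum becomes a plain product over $n\le N$.

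For the lower bounds I would construct a Cantor subset carrying a natural measure $\mu$. Choose a sequence $n_1<n_2<\cdots$ with $n_{k+1}-n_k\asymp\epsilon_k n_k^{1-\gamma}$ and $\epsilon_k\downarrow0$, so that $n_{k+1}^\gamma-n_k^\gamma\to0$ while the gaps still tend to infinity. At the ``spike'' positions $n_k$ let $a_{n_k}$ be independent and uniformly distributed on intervals $I_k\subset[(\alpha-\delta_k)\exp(n_k^\gamma),\,\alpha\exp(n_k^\gamma))$ of length $\asymp\delta_k\exp(n_k^\gamma)$, with $\delta_k\downarrow0$ slowly enough that these intervals increase with $k$; at the remaining positions let the $a_j$ be governed independently by a measure $\mu_M$ of maximal dimension $s_M$ for the full shift on the finite alphabet $\{1,\dots,M\}$, where $s_M\to1$ as $M\to\infty$. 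One checks that $\mu$ is carried by $F(\gamma,\alpha)$: the spikes dominate $T_n$, and since the last spike position $n_{k(n)}\le n$ satisfies $n^\gamma-n_{k(n)}^\gamma\to0$ and $\delta_{k(n)}\to0$, one gets $T_n/\exp(n^\gamma)\to\alpha$. Then $-\log\mu([a_1,\dots,a_N])$ equals $\sum_{n_k\le N}\log|I_k|$ plus the contribution of the free positions, $-\log|[a_1,\dots,a_N]|$ equals $2\log q_N$ plus a lower-order term, and $\log|I_k|=n_k^\gamma(1+o(1))$. For $\gamma>1/2$ one has $\sum_{n_k\le N}n_k^\gamma\gg N$, so the spikes dominate both quantities and the cylinder ratio tends to $1/2$; for $\gamma<1/2$ one has $\sum_{n_k\le N}n_k^\gamma=o(N)$, so the free positions dominate and the ratio tends to $s_M$. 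Passing from cylinders to balls in the standard way (consecutive rank-$n$ cylinders have comparable length and $\mu$-mass when the last entry is $\le M$, the neighbours of a spike cylinder are either comparable or $\mu$-null, and $\log a_{n+1}=o(\log q_n)$ throughout), the mass distribution principle gives $\dim_H F(\gamma,\alpha)\ge1/2$ when $\gamma>1/2$, and $\dim_H F(\gamma,\alpha)\ge s_M$ for every $M$, hence $\ge1$, when $\gamma<1/2$.

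The hard part is the structural step and the combinatorics built on it: identifying the correct block width $\asymp\epsilon\,n^{1-\gamma}$ that membership in $F(\gamma,\alpha)$ forces, and then verifying that the combinatorial cost of placing the distinguished large partial quotients, $\exp(O(\epsilon^{-1}N^{\gamma}\log N))$, is negligible compared with the main gain $\exp(-c_\gamma\,\epsilon^{-1}N^{2\gamma})$. It is precisely the inequality $2\gamma>1$ that makes this comparison work — and that forces the value $1/2$ — which is also why the borderline exponent $\gamma=1/2$ is excluded from the statement.
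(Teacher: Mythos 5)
Your proposal is correct in all its parts, but the route it takes for the substantive half — the upper bound for $\gamma>1/2$ — is genuinely different from the paper's. The paper converts the $T_n$-condition into an $S_n$-condition (namely $(1-\epsilon)e^{n^\gamma}\le S_n(x)\le n(1+\epsilon)e^{n^\gamma}$, using $T_n\le S_n\le nT_n$), chooses the subsequence $n_k=k^{1/\gamma}(\log k)^{1/\gamma^2}$ so that the block sums $S_{n_k}-S_{n_{k-1}}$ are pinned down, and then reuses verbatim the convolution estimate of Lemma~\ref{estimation-zeta} exactly as in the proof of Theorem~\ref{main}. You instead localize the large partial quotient itself: the observation that the digit responsible for $T_n\ge(\alpha-\epsilon)e^{n^\gamma}$ must sit within $O(\epsilon n^{1-\gamma})$ of $n$ is correct (it follows from $a_k\le T_k\le(\alpha+\epsilon)e^{k^\gamma}$ for $k$ past the threshold, giving $n^\gamma-k^\gamma\le\log\frac{\alpha+\epsilon}{\alpha-\epsilon}$), and your count — placement cost $\exp(O(\epsilon^{-1}N^\gamma\log N))$ and free-digit cost $\zeta(2s)^N=\exp(O(N))$ against the gain $\exp(-(2s-1)c\,\epsilon^{-1}N^{2\gamma})$ from $\sum_i m_i^\gamma\asymp\epsilon^{-1}N^{2\gamma}$ — checks out and needs only the elementary tail bound $\sum_{a\ge A}a^{-2s}\ll A^{1-2s}$ rather than Lemma~\ref{estimation-zeta}. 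This buys a more transparent explanation of the threshold ($N^{2\gamma}$ versus $N$) at the price of the extra combinatorial bookkeeping. For the lower bounds the paper is simpler than you are: for $\gamma>1/2$ it just applies Lemma~\ref{lem:bigb} with $c_1(n)=\alpha(1-1/n)$, $c_2(n)=\alpha$, i.e.\ every digit is a spike, whereas your sparse-spike Cantor set is an unnecessary elaboration there (and note that for $\gamma\ge1$ your requirement that the gaps $n_{k+1}-n_k\asymp\epsilon_k n_k^{1-\gamma}$ tend to infinity is incompatible with $n_{k+1}^\gamma-n_k^\gamma\to0$, so the construction must degenerate to the all-spike case — harmless, but worth saying); for $\gamma<1/2$ your construction is essentially the Wu--Xu one that the paper cites.
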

We do not know what happens in the case $\gamma=1/2$.

\section{Preliminaries}

For any $a_1, a_2,\cdots, a_n\in
\mathbb{N}$, call
\begin{equation*} I_n(a_1, \cdots, a_n):=\{x\in [0,1): a_1(x)=a_1,
\cdots, a_n(x)=a_n\}
\end{equation*}
 a {\em rank-$n$ basic interval}. Denote by $I_n(x)$ the rank-$n$ basic interval containing $x$. Write $|I|$ for the length of an
interval $I$.
The length of the basic interval $I_n(a_1, a_2, \cdots, a_n)$ satisfies
\begin{eqnarray}\label{length-estimate}
\prod\limits_{k=1}^n(a_k+1)^{-2}\leq \Big|I_n(a_1,\cdots, a_n)\Big| \leq
\prod\limits_{k=1}^na_k^{-2}.
\end{eqnarray}

Let $A(m,n):=\big\{(i_1, \dots, i_n)\in \{1,\dots, m\}^n: \ \sum_{k=1}^n i_k=m\big\}$. Let $\zeta(\cdot)$ be the Riemann zeta function.

\begin{lem}\label{estimation-zeta}
For any $s\in(1/2, 1)$, for all $n\geq 1$ and for all $m\geq n$, we have
\[
   \sum_{(i_1, \dots, i_n)\in A(m,n)} \prod\limits_{k=1}^ni_k^{-2s} \leq \left(\frac{9}{2}\big(2+\zeta(2s)\big)\right)^n m^{-2s}.
\]
\end{lem}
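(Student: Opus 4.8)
The plan is to prove the slightly stronger statement that, with $t:=2s\in(1,2)$ and $C:=\frac92\bigl(2+\zeta(t)\bigr)$,
\[
f_n(m):=\sum_{(i_1,\dots,i_n)\in A(m,n)}\ \prod_{k=1}^n i_k^{-t}\ \le\ C^n m^{-t}\qquad\text{for all }n\ge1\text{ and all }m\ge1,
\]
by induction on $n$. When $m<n$ the set $A(m,n)$ is empty, so $f_n(m)=0$ and the bound is trivial; hence this implies the Lemma. The base case $n=1$ is immediate, since $A(m,1)=\{(m)\}$ forces $f_1(m)=m^{-t}\le Cm^{-t}$ because $C\ge1$.

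For the inductive step I would split a composition $(i_1,\dots,i_{n+1})\in A(m,n+1)$ according to its first entry. Once $i_1=i$ is fixed, the remaining entries run through $A(m-i,n)$, which is empty unless $i\le m-n$; hence
\[
f_{n+1}(m)=\sum_{i=1}^{m-1} i^{-t}\, f_n(m-i),
\]
with the terms $i>m-n$ simply vanishing. Applying the inductive hypothesis $f_n(m-i)\le C^n(m-i)^{-t}$ to each surviving term gives
\[
f_{n+1}(m)\le C^n\sum_{i=1}^{m-1}\frac{1}{i^t\,(m-i)^t},
\]
so the whole proof reduces to the single estimate $\displaystyle\sum_{i=1}^{m-1} i^{-t}(m-i)^{-t}\le C\,m^{-t}$ for every $m\ge2$.

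To establish that, I would start from the elementary identity $\dfrac{1}{i(m-i)}=\dfrac1m\Bigl(\dfrac1i+\dfrac1{m-i}\Bigr)$, valid for $1\le i\le m-1$, raise both sides to the power $t$, and invoke the convexity of $x\mapsto x^t$ on $[0,\infty)$, which yields $(a+b)^t\le 2^{t-1}(a^t+b^t)$ for $t\ge1$. Then
\[
\sum_{i=1}^{m-1}\frac{1}{i^t(m-i)^t}=\frac{1}{m^t}\sum_{i=1}^{m-1}\Bigl(\frac1i+\frac1{m-i}\Bigr)^t\le\frac{2^{t-1}}{m^t}\sum_{i=1}^{m-1}\Bigl(\frac1{i^t}+\frac1{(m-i)^t}\Bigr)=\frac{2^t}{m^t}\sum_{i=1}^{m-1}\frac1{i^t}\le\frac{2^t\zeta(t)}{m^t}.
\]
Since $t=2s<2$ we have $2^t<4\le\frac92$, so $2^t\zeta(t)\le\frac92\zeta(t)\le\frac92\bigl(2+\zeta(t)\bigr)=C$; substituting back gives $f_{n+1}(m)\le C^{n+1}m^{-t}$ and closes the induction.

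The argument is essentially bookkeeping, and I do not expect a serious obstacle. The only points deserving a moment's care are writing down the recursion $f_{n+1}(m)=\sum_i i^{-t}f_n(m-i)$ with the correct summation range (and observing that the out‑of‑range terms are $0$), and the convexity step — which is the sole source of the constant and is far from optimal, but $\frac92(2+\zeta(2s))$ is comfortably more than what is needed downstream.
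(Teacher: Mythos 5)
Your proof is correct. It shares the paper's overall skeleton — induction on $n$, with the inductive step reducing everything to the single convolution estimate $\sum_{i=1}^{m-1} i^{-t}(m-i)^{-t}\le C\,m^{-t}$ — but you prove that key estimate by a genuinely different and cleaner route. The paper splits the sum at $u=\lfloor m/3\rfloor$, bounding the tails by $2\zeta(2s)(m-u)^{-2s}$ and the middle block by $(m-2u+1)u^{-2s}(m-u)^{-2s}\le 4(m-u)^{-2s}$, which is where the somewhat awkward constant $\frac92\bigl(2+\zeta(2s)\bigr)$ comes from. Your identity $\frac{1}{i(m-i)}=\frac1m\bigl(\frac1i+\frac1{m-i}\bigr)$ combined with $(a+b)^t\le 2^{t-1}(a^t+b^t)$ gives the sharper bound $2^t\zeta(t)\le 4\zeta(2s)$, which is strictly smaller than the paper's constant, so the stated inequality follows a fortiori; it also avoids the floor-function bookkeeping entirely. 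Two further minor streamlinings on your side: you anchor the induction at $n=1$ rather than $n=2$, and you extend the statement to all $m\ge1$ by the convention that $A(m,n)=\emptyset$ for $m<n$, which makes the recursion $f_{n+1}(m)=\sum_{i=1}^{m-1}i^{-t}f_n(m-i)$ apply without worrying about the summation range. All steps check out.
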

\begin{proof}
The proof goes by induction. First consider the case $n=2$. For $m=2$ the assertion holds, assume that $m>2$. We will estimate the sum $\sum_{i=1}^{m-1} i^{-2s}(m-i)^{-2s}$. For any $u\in [1,m/2]$ we have
\begin{eqnarray*}
   & &\sum_{i=1}^{m-1} i^{-2s}(m-i)^{-2s}
   =2\sum_{i=1}^{u-1} i^{-2s}(m-i)^{-2s} +\sum_{i=u}^{m-u} i^{-2s}(m-i)^{-2s}\\
  & \leq & 2\Big(\sum_{i=1}^{u-1} i^{-2s}\Big) (m-u)^{-2s} + (m-2u+1)u^{-2s} (m-u)^{-2s}\\
   &\leq & 2\zeta(2s)(m-u)^{-2s} + (m-2u+1)u^{-2s} (m-u)^{-2s}.
\end{eqnarray*}
Take $u=\lfloor m/3\rfloor$. Then one has
\[
(m-2u+1)u^{-2s} = (m+1)u^{-2s}-2u^{1-2s}\leq (m+1)\big\lfloor\frac{m}{3}\big\rfloor^{-2s}-2\leq 4.
\]
Hence, the above sum is bounded from above by
$$(4+2\zeta(2s))\cdot \big(\frac{2m}{3}\big)^{-2s} \leq \frac{9}{2}(2+\zeta(2s))\cdot m^{-2s}.$$

Suppose now that the assertion holds for $n\in \{2,n_0\}$. Then for $n=n_0+1$, we have
\begin{align*}
   & \sum_{(i_1, \dots, i_{n_0+1})\in \{1,\dots, m\}^{n_0+1}, \ \sum i_k=m} \prod\limits_{k=1}^{n_0+1}i_k^{-2s}  \\
   =&\sum_{i=1}^{m-1}i^{-2s} \sum_{(i_1, \dots, i_{n_0})\in \{1,\dots, m\}^{n_0}, \ \sum i_k=m-i} \prod\limits_{k=1}^{n_0}i_k^{-2s} \\
  \leq & \sum_{i=1}^{m-1} i^{-2s}\left(\frac{9}{2}\big(2+\zeta(2s)\big)\right)^{n_0}(m-i)^{-2s}\\
  =&\left(\frac{9}{2}\big(2+\zeta(2s)\big)\right)^{n_0} \cdot\sum_{i=1}^{m-1} i^{-2s}(m-i)^{-2s}\\
   \leq & \left(\frac{9}{2}\big(2+\zeta(2s)\big)\right)^{n_0}\cdot \left(\frac{9}{2}\big(2+\zeta(2s)\big)\right)m^{-2s}\\
   =&\left(\frac{9}{2}\big(2+\zeta(2s)\big)\right)^{n_0+1}m^{-2s}.
\end{align*}
\end{proof}

Let
\[
A(\gamma, c_1, c_2, N) := \left\{x\in (0,1) : c_1 < \frac {a_n(x)} {e^{n^\gamma}} < c_2, \ \forall n \geq N\right\}.
\]
Denote by $N_0$ the smallest integer $n$ such that $(c_2-c_1)\cdot e^{n^\gamma}>1$. Then the set $A(\gamma, c_1, c_2, N) $ is non-empty when $N\geq N_0$.

\begin{lem} \label{lem:biga}
For any $\gamma>0$, any $N\geq N_0$ and any $0<c_1<c_2$,
\[
\dim_H A(\gamma, c_1, c_2, N) = \frac 12.
\]
\end{lem}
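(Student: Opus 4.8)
The plan is to realise $A:=A(\gamma,c_1,c_2,N)$ as a countable union of homogeneous-Moran-type Cantor sets and to compute the dimension of each by the standard two-sided scheme: a natural cover for the upper bound, a natural measure together with the mass distribution principle for the lower bound. Put $\Sigma_n:=\sum_{k=N}^n k^\gamma$, so that $\Sigma_n\sim n^{1+\gamma}/(1+\gamma)$; since $\gamma>0$, $\Sigma_n$ outgrows both $n$ and $n^\gamma$. As the digits $a_1,\dots,a_{N-1}$ are unconstrained, $A$ is the union, over $(b_1,\dots,b_{N-1})\in\N^{N-1}$, of the sets $A'$ on which $a_k\equiv b_k$ for $k<N$ and $a_k\in B_k:=\{m\in\N:c_1e^{k^\gamma}<m<c_2e^{k^\gamma}\}$ for $k\ge N$; as $\dim_H$ of a countable union is the supremum, it suffices to prove $\dim_H A'=\tfrac12$. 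Writing $M_k:=\#B_k=(c_2-c_1)e^{k^\gamma}(1+o(1))$, one reads off from \eqref{length-estimate} that every admissible rank-$n$ cylinder $I_n\subset A'$ has $|I_n|=\exp(-2\Sigma_n(1+o(1)))$, uniformly in the choice of digits, and that the number of such cylinders is $\prod_{k=N}^n M_k=\exp(\Sigma_n(1+o(1)))$. The heuristic ``$e^{\Sigma_n}$ cylinders of size $e^{-2\Sigma_n}$'' already explains the value $\tfrac12$.

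For the upper bound, fix $s>\tfrac12$ and cover $A'$ by its admissible rank-$n$ cylinders:
\[
\sum_{I_n}|I_n|^s\le\Big(\prod_{k=N}^n M_k\Big)\exp\!\big(-2s\Sigma_n(1+o(1))\big)=\exp\!\big((1-2s)\Sigma_n(1+o(1))\big)\xrightarrow[n\to\infty]{}0,
\]
because $1-2s<0$. Since the mesh also tends to $0$ this gives $\mathcal H^s(A')=0$, hence $\dim_H A'\le s$; letting $s\downarrow\tfrac12$ yields $\dim_H A'\le\tfrac12$.

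For the lower bound, fix $s<\tfrac12$ and let $\mu$ be the probability measure on $A'$ assigning each admissible rank-$n$ cylinder mass $\prod_{k=N}^n M_k^{-1}=\exp(-\Sigma_n(1+o(1)))$ (uniform choice among admissible digits). As $1-2s>0$ and $\Sigma_n\to\infty$, this forces $\mu(I_n)\le C|I_n|^s$, and in fact $\mu(I_n)\le C|J_n|^s$, where $J_n$ denotes the convex hull of $A'\cap I_n$; indeed $|J_n|\asymp M_{n+1}|I_{n+1}(x)|\asymp e^{-(n+1)^\gamma}|I_n|=\exp(-2\Sigma_n(1+o(1)))$. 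I would then verify the Frostman bound $\mu(B(x,r))\le Cr^s$ for $x\in A'$ and $r$ small. Choosing $n$ with $|I_{n+1}(x)|\le r<|I_n(x)|$, inside $I_n(x)$ the measure $\mu$ is spread uniformly over the $M_{n+1}$ consecutive cylinders $I_{n+1}(\,\cdot\,,j)$, $j\in B_{n+1}$, of comparable lengths $\asymp|I_{n+1}(x)|$, which tile $J_n$. If $r\le|J_n|$, the ball meets $O(1+r/|I_{n+1}(x)|)$ of them, so
\[
\mu(B(x,r))\lesssim\mu(I_{n+1}(x))+r\,\mu(I_n(x))/|J_n|\lesssim|I_{n+1}(x)|^s+r\,|J_n|^{s-1}\lesssim r^s,
\]
using $|I_{n+1}(x)|\le r\le|J_n|$ and $s<1$; if $|J_n|<r<|I_n(x)|$, then $B(x,r)\supseteq A'\cap I_n(x)$, whence $\mu(B(x,r)\cap I_n(x))=\mu(I_n(x))\le C|J_n|^s\le Cr^s$. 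The mass distribution principle then gives $\dim_H A'\ge s$, and $s\uparrow\tfrac12$ concludes.

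The only genuinely delicate point is this Frostman estimate. First, one must get honest (not merely asymptotic) constants in $\mu(I_n)\le C|J_n|^s$ uniformly in $n$; this works precisely because $\Sigma_n\sim n^{1+\gamma}/(1+\gamma)$ outgrows $n$ and $n^\gamma$, so all floor/ceiling and $c_1,c_2$ corrections are absorbed into $(2s-1)\Sigma_n\to-\infty$. Secondly, one must handle balls that straddle several rank-$n$ cylinders or protrude slightly from the parent $I_{n-1}(x)$; this costs only a bounded factor, by the comparability of neighbouring cylinders and the fact that $\mu$ assigns equal mass to sibling cylinders. If one prefers to avoid this bookkeeping altogether, one may run the lower bound on the smaller set in which only every $\lceil 2(c_2/c_1)^2\rceil$-th element of each $B_k$ is admitted: the forbidden ``buffer'' cylinders then separate consecutive admissible ones by more than their own diameter, so a ball of radius $r<|I_n(x)|$ cannot reach outside the cylinder $I_n(x)$, making the multi-scale estimate immediate; this only multiplies $M_k$ by a constant, hence does not change the value $\tfrac12$.
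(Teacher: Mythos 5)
Your proof is correct and follows essentially the same route as the paper's: the same cover by admissible rank-$n$ cylinders (about $e^{\Sigma_n}$ of them, each of length about $e^{-2\Sigma_n}$) for the upper bound, and for the lower bound the same uniformly distributed measure analysed via the cluster structure of consecutive sibling cylinders inside each parent, concluding with the mass distribution (Frostman) principle. The only difference is cosmetic: the paper computes the lower local dimension $\tfrac12-O(1/n)$ directly at the critical radius $r\approx e^{-2\Sigma_n+n^\gamma}$, whereas you verify the Frostman bound $\mu(B(x,r))\lesssim r^s$ for each fixed $s<\tfrac12$.
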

\begin{proof}
This lemma is only a simple special case of \cite[Lemma 3.2]{FLWW}, but we will sketch the proof (based on \cite{JR}), needed for the next lemma. Without loss of generality, we suppose $N_0=1$ and let $N=1$ (the proof for other $N$ is almost identical).

Let $a_1, a_2,\ldots, a_n$ satisfy $c_1 < a_j e^{-j^\gamma} < c_2$ for all $j$. Those are exactly the possible sequences for which the basic interval $I_n(a_1,\ldots,a_n)$ has nonempty intersection with $A(\gamma, c_1, c_2, 1)$.

There are approximately
\begin{equation}\label{estimate-nb}
\prod_{j=1}^n (c_2-c_1)e^{j^\gamma} \approx e^{\sum_1^n j^\gamma}
\end{equation}
of such basic intervals, each of diameter
\begin{equation}\label{estimate-l}
|I_n(a_1,\ldots,a_n)| \approx e^{-2 \sum_1^n j^\gamma},
\end{equation}
(both estimations are up to a factor exponential in $n$). Hence, by using the intervals $\{I_n(a_1,\ldots,a_n)\}$ as a cover, we obtain
\[
\dim_H A(\gamma, c_1, c_2, 1) \leq \frac 12.
\]

To get the lower bound, we consider a probability measure $\mu$ uniformly distributed on $A(\gamma, c_1, c_2, 1)$, in the following sense: given $a_1, \ldots, a_{n-1}$, the probability of $a_n$ taking any particular value between $c_1 e^{n^\gamma}$ and $c_2 e^{n^\gamma}$ is the same.

The basic intervals $I_n(a_1,\ldots,a_n)$ have, up to a factor $c^n$, the length $\exp({-2\sum_1^n j^\gamma})$ and the measure $\exp({-\sum_1^n j^\gamma})$. They are distributed in clusters: all $I_n(a_1,\ldots,a_n)$ contained in a single $I_n(a_1,\ldots,a_{n-1})$ form an interval of length $\exp({n^\gamma})\cdot \exp({-2\sum_1^{n} j^\gamma})$ (up to a factor $c^n$, with $c$ being a constant), then there is a gap, then there is another cluster. Hence, for any $r\in (\exp({-2 \sum_1^n j^\gamma}), \  \exp({-2 \sum_1^{n-1} j^\gamma}))$ and any $x\in A(\gamma, c_1, c_2, 1)$ we can estimate the measure of $B(x,r)$:

\[
\mu(B(x,r)) \approx \begin{cases} r \cdot e^{- \sum_1^n j^\gamma} &\text{ if } r < e^{-2 \sum_1^{n} j^\gamma + n^\gamma}\\ e^{- \sum_1^{n-1} j^\gamma}  &\text{ if } r > e^{-2 \sum_1^{n} j^\gamma + n^\gamma}\end{cases}
\]
(up to a factor $c^n$). The minimum of $\log \mu(B(x,r))/\log r$ is thus achieved for $r=e^{-2 \sum_1^{n} j^\gamma + n^\gamma}$, and this minimum equals

\[
\frac {-\sum_1^{n-1} j^\gamma} {-2\sum_1^{n} j^\gamma + n^\gamma} \approx \frac {- n^{\gamma+1}/(\gamma+1)} {-2n^{\gamma+1}/(\gamma+1) - n^\gamma} = \frac 12 - O(1/n).
\]
Hence, the lower local dimension of $\mu$ equals $1/2$ at each point of $A(\gamma, c_1, c_2, 1)$, which implies
\[
\dim_H  A(\gamma, c_1, c_2, 1) \geq \frac 12
\]
by the Frostman Lemma (see \cite[Principle 4.2]{Fa}).
\end{proof}

Let now $c_1$ and $c_2$ not be constant but depend on $n$:
\[
B(\gamma, c_1, c_2, N) = \left\{x\in (0,1) : c_1(n) < \frac {a_n(x)} {e^{n^\gamma}} < c_2(n) \ \forall n \geq N\right\}.
\]
 A slight modification of the proof of Lemma \ref{lem:biga} gives the following.

\begin{lem} \label{lem:bigb}
Fix $\gamma>0$. Assume $0< c_1(n) <c_2(n)$ for all $n$. Assume also that
\[
\lim_{n\to \infty} \frac {\log (c_2(n) - c_1(n))} {n^\gamma}  =0
\]
and
\begin{equation*}
\liminf_{n\to \infty} {\log c_1(n) \over \log n} > - \infty \quad \text{and} \quad \limsup_{n\to \infty} {\log c_2(n) \over \log n} < + \infty.
\end{equation*}
Then there exists an integer $N_1$ such that $(c_2(n)-c_1(n))\cdot e^{n^\gamma}>1$ for all $n\geq N_1$, and for all $N\geq N_1$,
\[
\dim_H B(\gamma, c_1, c_2, N) = 1/2.
\]
\end{lem}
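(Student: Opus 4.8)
The plan is to repeat the proof of Lemma~\ref{lem:biga} almost verbatim; the only real content is to verify that the two displayed hypotheses on $c_1(n),c_2(n)$ are exactly what keeps every error term negligible on the exponential scale $S_n:=\sum_{j=1}^n j^\gamma\sim n^{\gamma+1}/(\gamma+1)$, which in the end relies on $\gamma>0$. First I would settle the existence of $N_1$: since $\log\left((c_2(n)-c_1(n))e^{n^\gamma}\right)=n^\gamma\left(1+\log(c_2(n)-c_1(n))/n^\gamma\right)\to+\infty$ by the first hypothesis, one has $(c_2(n)-c_1(n))e^{n^\gamma}>1$ for all large $n$, and similarly $c_1(n)e^{n^\gamma}\to\infty$ (using $\log c_1(n)\ge-C\log n$ for large $n$, from the $\liminf$ hypothesis); hence for $n$ large the admissible values of $a_n$ exist and tend to infinity, and $B(\gamma,c_1,c_2,N)\ne\emptyset$ for every $N\ge N_1$. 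Fix such an $N$.

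For the upper bound, a rank-$n$ basic interval meets $B(\gamma,c_1,c_2,N)$ iff $a_j\in(c_1(j)e^{j^\gamma},c_2(j)e^{j^\gamma})$ for $N\le j\le n$ while $a_1,\dots,a_{N-1}$ are arbitrary. There are $\prod_{j=N}^n\left((c_2(j)-c_1(j))e^{j^\gamma}+O(1)\right)=\exp(S_n+o(S_n))$ admissible tails: indeed $\log(c_2(j)-c_1(j))=o(j^\gamma)$ forces $\sum_{j\le n}\log(c_2(j)-c_1(j))=o(S_n)$ by a standard averaging argument, and the $O(1)$ per factor only contributes $e^{O(n)}=e^{o(S_n)}$. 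By (\ref{length-estimate}) a basic interval with admissible tail has length $\exp(-2S_n+o(S_n))$, because $\log a_k=k^\gamma+O(\log k)$ for $k\ge N$ (from $k^{-C}\le c_i(k)\le k^C$ eventually) and $\sum_{k\le n}O(\log k)=O(n\log n)=o(S_n)$. The unrestricted prefix $a_1,\dots,a_{N-1}$ only multiplies the $s$-sum of this cover by the finite constant $\zeta(2s)^{N-1}$ (via $|I_n|\le\prod_k a_k^{-2}$), so for $s>1/2$ the $s$-sum is $\le\zeta(2s)^{N-1}\exp((1-2s)S_n+o(S_n))\to0$, whence $\dim_H B(\gamma,c_1,c_2,N)\le1/2$; this is the analogue of (\ref{estimate-nb})--(\ref{estimate-l}).

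For the lower bound I would use the measure $\mu$ of Lemma~\ref{lem:biga}: fix $a_1,\dots,a_{N-1}$ (say all equal to $1$) and, for $n\ge N$ and given $a_1,\dots,a_{n-1}$, let $a_n$ be uniform on its admissible range. Then $\mu(I_n)=\exp(-S_n+o(S_n))$ and $|I_n|=\exp(-2S_n+o(S_n))$ by the estimates above, the surviving $I_n$ inside a fixed $I_{n-1}$ again form a single contiguous cluster of length $\exp(n^\gamma-2S_n+o(S_n))$ on which $\mu$ is uniform up to a factor $e^{o(S_n)}$ (the $I_n$ of one cluster have comparable lengths, since $(c_2(n)/c_1(n))^2\le n^{O(1)}$). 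Feeding these into the two-case estimate of $\mu(B(x,r))$ for $|I_n(x)|<r<|I_{n-1}(x)|$, exactly as in Lemma~\ref{lem:biga}, gives $\log\mu(B(x,r))/\log r\ge1/2-o(1)$ as $r\to0$, uniformly over $x\in B(\gamma,c_1,c_2,N)$ (here one uses $S_{n-1}\sim S_n$ and $n^\gamma=o(S_n)$). Thus $\mu$ has lower local dimension $\ge1/2$ everywhere on $B(\gamma,c_1,c_2,N)$, and the mass distribution principle \cite[Principle 4.2]{Fa} gives $\dim_H B(\gamma,c_1,c_2,N)\ge1/2$. Combining the two bounds proves the lemma.

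The argument is routine once Lemma~\ref{lem:biga} is available, so the main obstacle is exactly the bookkeeping of these error terms — i.e.\ confirming that replacing constant $c_1,c_2$ by $n$-dependent ones changes nothing on the exponential scale. The two delicate points are: (i) that the \emph{pointwise} bound $\log(c_2(n)-c_1(n))=o(n^\gamma)$ still forces $\sum_{j\le n}\log(c_2(j)-c_1(j))=o(S_n)$, which needs the elementary averaging step used above; and (ii) that $\log c_i(n)=O(\log n)$ keeps $|I_n|$, the cluster length, and the density of $\mu$ at the same exponents as in the constant-$c_i$ case — this, together with the $e^{O(n)}$ correction from counting integers, is precisely where $\gamma>0$ enters, as it guarantees $O(n\log n)=o(n^{\gamma+1})$.
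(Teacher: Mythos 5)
Your proposal is correct and follows essentially the same route as the paper: the authors likewise just rerun the proof of Lemma~\ref{lem:biga} with $c_1,c_2$ replaced by $c_1(n),c_2(n)$, observing that the first hypothesis makes the counting estimate (\ref{estimate-nb}) hold up to a factor $\exp(\epsilon\sum_1^n j^\gamma)$ and the polynomial bounds on $c_i(n)$ make the length estimate (\ref{estimate-l}) hold up to $\exp(cn\log n)$, both negligible against $\exp(\sum_1^n j^\gamma)\sim\exp(n^{1+\gamma}/(1+\gamma))$. Your write-up is in fact more detailed than the paper's (which omits the averaging step and the treatment of the prefix $a_1,\dots,a_{N-1}$), and all the error-term bookkeeping you supply is accurate.
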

\begin{proof}
We need only to replace the constants $c_1$ and $c_2$ by $c_1(n)$ and $c_2(n)$ in the proof of Lemma \ref{lem:biga}. Notice that by the assumptions of Lemma \ref{lem:bigb}, the formula (\ref{estimate-nb}) holds up to a factor $\exp(\epsilon\sum_1^n j^\gamma)$ for a sufficiently small $\epsilon>0$. While the formula (\ref{estimate-l}) holds up to a factor $\exp({cn\log n})$ for some bounded $c$. All these factors are much smaller than the main term $\exp({\sum_1^n j^\gamma})$ which is of order $\exp({n^{1+\gamma}})$. The rest of the proof is the same as that of Lemma \ref{lem:biga}.
\end{proof}

\bigskip

\section{Proofs}

\begin{proof}[Proof of Theorem \ref{main}]
Let $\varphi: \mathbb{N} \rightarrow \mathbb{N}$ be defined by $\varphi(n)=\exp(n^\gamma)$ with $\gamma>0$. For this case, we will denote $E_\varphi$ by $E_\gamma$.

Let us start from some easy observations, giving (among other things) a simple proof of $\dim_H E_\gamma = 1/2$ for $\gamma\geq 1$.

Consider first $\gamma\geq 1/2$. If $x\in E_\gamma$
then for any $\epsilon >0$ and for $n$ large enough
\begin{align}\label{S_n-ineq} (1-\epsilon)e^{n^\gamma} \leq S_n(x) \leq (1+\epsilon)e^{n^\gamma}
\end{align}
and
\[
 (1-\epsilon)e^{(n+1)^\gamma} \leq S_{n+1}(x) \leq (1+\epsilon)e^{(n+1)^\gamma}.
\]
Hence
\[
 (1-\epsilon)e^{(n+1)^\gamma} -  (1+\epsilon)e^{n^\gamma} \leq a_{n+1}(x) \leq  (1+\epsilon)e^{(n+1)^\gamma} -  (1-\epsilon)e^{n^\gamma}.
\]
For $\gamma\geq 1$ this implies
\[
E_\gamma \subset \bigcup_N A(\gamma, c_1, c_2, N)
\]
for some constants $c_1, c_2$. By Lemma \ref{lem:biga},
\[
\dim_H E_\gamma \leq \frac 12, \quad \forall \gamma\geq 1.
\]

Consider now any $\gamma>0$. Set
$$c_1(n) = (e^{n^\gamma}-e^{(n-1)^\gamma})e^{-n^\gamma}\quad \text{and} \quad c_2(n)={n+1 \over n} c_1(n).$$
For $\gamma \geq 1$, $c_1(n)$ and $c_2(n)$ are bounded from below. For $\gamma <1$ and $n$ large, we have
$$
(e^{n^\gamma}-e^{(n-1)^\gamma})e^{-n^\gamma} \approx \gamma n^{\gamma-1}.
$$
Thus, in both cases the assumptions of Lemma \ref{lem:bigb} are satisfied. Checking  $B(\gamma, c_1, c_2, N) \subset  E_\gamma$, we deduce by Lemma \ref{lem:bigb} that
\[
\dim_H E_\gamma \geq \frac 12, \quad \forall \gamma>0.
\]
Therefore, we have obtained $\dim_H E_\gamma = 1/2$ for $\gamma\geq 1$ and $\dim_H E_\gamma \geq 1/2$ for $\gamma>0$.
What is left to prove is that for $\gamma\in [1/2, 1)$ we have $\dim_H E_\gamma \leq 1/2$.

Let us first assume that $\gamma>1/2$. Remember that if $x\in E_\gamma$, then for any $\epsilon>0$ and for $n$ large enough we have (\ref{S_n-ineq}).
Take a subsequence $n_0=1$, and $n_k=k^{1/\gamma}\ (k\geq 1$). Then there exists an integer $N\geq 1$ such that for all $k\geq N$,
\[
(1-\epsilon)e^{n_k^\gamma} \leq S_{n_k}(x) \leq (1+\epsilon)e^{n_k^\gamma},
\]
and (as $\exp(n_k^\gamma) = e^k$)
\[
(1-\epsilon)e^k-(1+\epsilon)e^{k-1} \leq S_{n_k}(x) -S_{n_{k-1}}(x) \leq (1+\epsilon)e^k-(1-\epsilon)e^{k-1}.
\]
Thus
\[
E_\gamma \subset \bigcup_N \bigcap_{k\geq N} A(\gamma, k, N),
\]
with $A(\gamma, k, N)$ being the union of the intervals $\{I_{n_k}(a_1, a_2, \cdots, a_{n_k})\}$ such that
\[
\sum_{j=n_{\ell-1}+1}^{n_\ell} a_j=m \quad \text{with}\quad m\in D_\ell, \quad N\leq \ell \leq k,
\]
where $D_\ell:=[(1-\epsilon)e^{n_\ell^\gamma}-(1+\epsilon)e^{n_\ell^\gamma-1}, (1+\epsilon)e^{n_\ell^\gamma}-(1-\epsilon)e^{n_\ell^\gamma-1}]$.

Now, we are going to estimate the upper bound of the Hausdorff dimension of $E_\varphi^{(1)} = \bigcap_{k} A(\gamma, k, 1)$. For $E_\varphi^{(N)} = \bigcap_{k\geq N} A(\gamma, k, N)$ with $N\geq 2$ we have the same bound and the proofs are almost the same.

Observe that every set $A(\gamma, k, N)$ has a product structure: the conditions on $a_i$ for $i\in (n_{\ell_1}, n_{\ell_1+1}]$ and for $i\in (n_{\ell_2}, n_{\ell_2+1}]$ are independent from each other. Hence,
for any $s\in (1/2, 1)$ we can apply  Lemma \ref{estimation-zeta} together with the formula
\[
 |I_{n_k}|^{s} \leq \prod_{\ell=1}^k (a_{n_{\ell-1}+1}a_{n_{\ell-1}+2}\cdots a_{n_{\ell}})^{-2s}
\]
to obtain
$$
\sum_{I_{n_k}\subset A(\gamma, k, 1)} |I_{n_k}|^{s} \leq \prod_{\ell=1}^k \sum_{m\in D_\ell} \left(\frac{9}{2}\big(2+\zeta(2s)\big)\right)^{n_\ell-n_{\ell-1}} m^{-2s}.
$$

Denote $r_1:=2 \epsilon (1-e^{-1})$ and $r_2:=(e-1-\epsilon e - \epsilon)/e$. Then we have
$|D_\ell| \leq  r_1e^{\ell} $ and any $m\in D_\ell$ is not smaller than $r_2 e^\ell$. Thus we get
\begin{align}\label{estimation-upp}
\sum_{I_{n_k}\subset A(\gamma, k, 1)} |I_{n_k}|^{s} \leq \prod_{\ell=1}^k r_1 e^\ell \cdot\left(\frac{9}{2}\big(2+\zeta(2s)\big)\right)^{\ell^{1/\gamma}-(\ell-1)^{1/\gamma}}\cdot r_2^{2s}e^{-2s\ell}.
\end{align}
We have $\ell^{1/\gamma}-(\ell-1)^{1/\gamma}\approx \ell^{1/\gamma-1}$. As $\gamma>1/2$, we have $1/\gamma-1<1$, and the main term in the above estimate is
$e^{(1-2s)\ell}$. Thus for any $s>1/2$, the product is uniformly bounded. Thus $\dim_HE_\varphi^{(1)} \leq 1/2$.

If $\gamma=1/2$, we take $n_k= k^2/L^2$ with $L$ being a constant and we repeat the same argument. Observe that now $\exp(n_k^\gamma) = e^{k/L}$. Then the same estimation will lead to
\begin{align}\label{estimation-upp1}
\sum_{I_{n_k}\subset A(\gamma, k, 1)} |I_{n_k}|^{s} \leq \prod_{\ell=1}^k r_1 r_2^{2s} \cdot  \left(\frac{9}{2}\big(2+\zeta(2s)\big)\right)^{\frac {\ell^{2}-(\ell-1)^{2}}{L^2}}e^{(1-2s)\ell/L}.
\end{align}
The main term of the right side of the above inequality should be $$\left(\frac{9}{2}\big(2+\zeta(2s)\big)\right)^{2\ell/L^2} \cdot e^{(1-2s)\ell/L}.$$
We solve the equation
\[ \left(\frac{9}{2}\big(2+\zeta(2s)\big)\right)^{2/L^2} \cdot e^{(1-2s)/L}=1,\]
which is equivalent to
\begin{equation}\label{eq} \left(\frac{9}{2}\big(2+\zeta(2s)\big)\right)=e^{\frac{2s-1} 2 L}.\end{equation}
Observe that the graphs of the two sides of \eqref{eq} (as functions of the variable $s$) always have a unique intersection for some $s_L\in [1/2,1]$, when $L$ is large enough. These $s_L$ are upper bounds for the Hausdorff dimension of $E_\varphi^{(1)}$. Notice that the intersecting point $s_L\to 1/2$ as $L\to \infty$ since the zeta function $\zeta$ has a pole at $1$. Thus the dimension of
 $E_\varphi^{(1)}$ is not greater than $1/2$.

 So, in both cases, we have obtained $\dim_H E_\gamma \leq 1/2$.
\end{proof}

\begin{proof}[Sketch proof of Theorem \ref{main-2}]
The proof goes like Section 4 of \cite{WX11} with the following changes. We choose $\epsilon_k = \psi(k)$. Let $n_1$ be such that $\varphi(n_1)\geq 1$ and define $n_k$ as the smallest positive integer such that
\begin{align}\label{varphi_k}
 \varphi(n_k)\geq (1+\epsilon_{k-1})\varphi(n_{k-1}).
 \end{align}
For a large enough integer $M$, set
\begin{align*}
E_M(\varphi):=&\Big\{ x\in [0,1): a_{n_1}(x) =\lfloor (1+\epsilon_1)\varphi(n_1)\rfloor+1, \\
&a_{n_k}(x)= \lfloor (1+\epsilon_k)\varphi(n_k)\rfloor -\lfloor (1+\epsilon_{k-1})\varphi(n_{k-1})\rfloor+1 \ \text{for all } k\geq 2,\\
& \text{and } 1\leq a_i(x)\leq M \ \text{for } i\neq n_k \ \text{for any } k\geq 1 \Big\}.
\end{align*}
We can check that $E_M(\varphi)\subset E_\varphi$.

To prove $\dim_HE_\varphi=1$, for any $\epsilon>0$, we construct a $(1/(1+\epsilon))$-Lipschitz map from $E_M(\varphi)$ to $E_M$, the set of numbers with partial quotients less than some $M$ in its continued fraction expansion. The theorem will be proved by letting $\epsilon\to 0$ and $M\to \infty$.

Such a Lipschitz map can be constructed by send a point $x$ in $E_M(\varphi)$ to a point $\tilde{x}$ by deleting all the partial quotients $a_{n_k}$ in its continued fraction expansion.
Define $r(n):=\min\{k: n_k \leq n\}$.
The $(1/(1+\epsilon))$-Lipschitz property will be assured if
\begin{align}\label{condition-digits-0}
\lim_{n\to\infty}\frac{r(n)}{n}=0,
\end{align}
and
\begin{align}\label{condition-digits}
\lim_{n\to\infty} \frac{\log(a_{n_1}a_{n_2}\cdots a_{n_{r(n)}})}{n}=0.
\end{align}

 In fact, by (\ref{ass-jump}), we can check for any $\delta>0$, $\psi(n)\leq n^{\delta}$ for $n$ large enough. Thus by definition of $n_k$, we can deduce that $r(n)\leq n^{1/2+\delta}$. Hence (\ref{condition-digits-0}) is satisfied.

 Further, we have
\begin{align}\label{sum-e-k}
\sum_{k=1}^{r(n)} \epsilon_k \approx r(n) \psi(r(n)).
\end{align}
By (\ref{varphi_k})
\[
\varphi(n)\geq \varphi(n_{r(n)}) \geq \prod_{k=1}^{r(n)-1}(1+\epsilon_k) \varphi(n_1) \geq e^{\sum_{k=1}^{r(n)} \epsilon_k/2}\varphi(n_1).
\]
Thus (\ref{sum-e-k}) implies
\begin{align}\label{rn-psi-rn}
r(n) \psi(r(n)) \ll \sqrt{n} \psi(n),
\end{align}
where $a_n\ll b_n$ means that $a_n/b_n$ is bounded by some constant when $n\to\infty$.

On the other hand, by (\ref{length-estimate}) and (\ref{varphi_k}), we have
\begin{align*}
\log(a_{n_1}a_{n_2}\cdots a_{n_{r(n)}}) \leq r(n) \log (2\varphi(n))+\sum_{k=1}^{r(n)} \epsilon_k.
\end{align*}
Hence (\ref{sum-e-k}) and (\ref{rn-psi-rn}) give
\[
\log(a_{n_1}a_{n_2}\cdots a_{n_{r(n)}}) \ll r(n)\sqrt{n}\psi(n) + r(n)\psi(r(n)) \ll {n \psi^2(n) \over \psi(r(n))} + r(n). 
\]
Finally, (\ref{condition-digits}) follows from the assumption (\ref{ass-jump}) and the already proved formula (\ref{condition-digits-0}).
\end{proof}


\begin{proof}[Proof of Theorem \ref{largest}]
For the case $\gamma<1/2$, the set constructed in Section 4 of \cite{WX11} (as a subset of the set of points for which $S_n(x) \approx e^{n^\gamma}$) satisfies also $T_n(x) \approx e^{n^\gamma}$ and has Hausdorff dimension one. We proceed to the case $\gamma> 1/2$.

The lower bound is a corollary of Lemma \ref{lem:bigb}. Take $c_1(n)= \alpha(1-{1 \over n})$ and $c_2(n)=\alpha$. Let $N_1$ be the smallest integer $n$ such that ${\alpha \over n}e^{n^\gamma}>1$. Then the conditions of Lemma \ref{lem:bigb} are satisfied, and for all points $x$ such that
$c_1(n)e^{n^\gamma}< a_n(x)< c_2(n)e^{n^\gamma}$, we have
\[T_n(x)/e^{n^\gamma} \geq c_1(n)=\alpha\left(1-{1 \over n}\right),\]
and
\[
T_n(x)/e^{n^\gamma} = a_{k} /e^{n^\gamma} \leq \alpha e^{k^\gamma}/e^{n^\gamma}  \leq \alpha,
\]
where $k\leq n$ is the position at which the sequence $a_1,\ldots, a_n$ achieves a maximum.
Thus for all $x\in B(\gamma, c_1, c_2, N_1)$
\[ \lim_{n\to \infty} T_n(x)/e^{n^\gamma} = \alpha. \]
Hence, $B(\gamma, c_1, c_2, N_1)\subset F(\gamma, \alpha) $ and the lower bound follows directly from Lemma \ref{lem:bigb}.

The upper bound is a modification of that of Theorem \ref{main}. 
 We consider the case $\alpha=1$ only, since for other $\alpha>0$, the proofs are similar.

Notice that for any $\epsilon>0$, if $x\in F(\gamma, 1)$, then for $n$ large enough,
\[ (1-\epsilon)e^{n^\gamma} \leq S_n(x) \leq n(1+\epsilon)e^{n^\gamma}.\]
Take a subsequence $n_k=k^{1/\gamma}(\log k)^{1/\gamma^2}$. Then
\[
(1-\epsilon)e^{k (\log k)^{1/\gamma}} \leq S_{n_k}(x) \leq k^{1/\gamma}(\log k)^{1/\gamma^2} (1+\epsilon)e^{k (\log k)^{1/\gamma}},
\]
and
\[
u_k\leq  S_{n_k}(x) -S_{n_{k-1}}(x) \leq v_k,
\]
with
\begin{align*}
u_k:=(1-\epsilon)e^{k (\log k)^{1/\gamma}} -(k-1)^{1/\gamma}(\log (k-1))^{1/\gamma^2} (1+\epsilon)e^{(k-1) (\log (k-1))^{1/\gamma}},
\end{align*}
and
\[
v_k:=k^{1/\gamma}(\log k)^{1/\gamma^2} (1+\epsilon)e^{k (\log k)^{1/\gamma}}-(1-\epsilon)e^{(k-1) (\log (k-1))^{1/\gamma}}.
\]
We remark that
\begin{align}\label{ukvk}
u_k > \frac 12 e^{k (\log k)^{1/\gamma}} , \ \ \ v_k < \frac 32 k^{1/\gamma}(\log k)^{1/\gamma^2} e^{k (\log k)^{1/\gamma}}
\end{align}
 when $k$ is large enough.

Observe that
\[
F(\gamma, 1) \subset \bigcup_N B(\gamma, N),
\]
with $B(\gamma, N)$ being the union of the intervals $\{I_{n_k}(a_1, a_2, \cdots, a_{n_k})\}_{k\geq N}$ such that
\[
\sum_{j=n_{\ell-1}+1}^{n_\ell} a_j=m \quad \text{with}\quad m\in D_\ell, \quad N \leq \ell \leq k,
\]
where $D_\ell$ is the set of integers in the interval $[u_\ell, v_\ell]$.

As in the proof of Theorem  \ref{main}, we need only study the set $B(\gamma,1)$.
For any $s\in (1/2, 1)$, since
\[
 |I_{n_k}|^{s} \leq \prod_{\ell=1}^k (a_{n_{\ell-1}+1}a_{n_{\ell-1}+2}\cdots a_{n_{\ell}})^{-2s},
\]
by Lemma \ref{estimation-zeta},
\begin{align*}
&\sum_{I_{n_k}\subset B(\gamma, N)} |I_{n_k}|^{s} \leq \prod_{\ell=1}^k \sum_{m\in D_\ell} \left(\frac{9}{2}\big(2+\zeta(2s)\big)\right)^{n_\ell-n_{\ell-1}} m^{-2s}.
\end{align*}
Note that by (\ref{ukvk}) the number of integers in $D_\ell$ satisfies
\[|D_{\ell}| \leq v_\ell-u_\ell \leq v_\ell < \frac 32 \cdot \ell^{1/\gamma}(\log \ell)^{1/\gamma^2}.\]
By (\ref{ukvk}), we also have \[m\geq u_\ell >\frac 12 e^{\ell (\log \ell)^{1/\gamma}} \quad \text{for any } m\in D_\ell.\]
Similar to (\ref{estimation-upp}) and (\ref{estimation-upp1}), we deduce that $\sum_{I_{n_k}\subset B(\gamma, N)} |I_{n_k}|^{s}$ is less than
\begin{align*}
 \prod_{\ell=1}^k \frac 32 \cdot \ell^{1/\gamma}(\log \ell)^{1/\gamma^2} e^{\ell (\log \ell)^{1/\gamma}}  \left(\frac{9}{2}\big(2+\zeta(2s)\big)\right)^{n_\ell - n_{\ell-1}} 2^{2s} e^{-2s\ell (\log \ell)^{1/\gamma}}.
\end{align*}

Since $n_\ell - n_{\ell-1}\approx \ell^{1/\gamma-1+o(\varepsilon)}$ and $1/\gamma-1<1$, the main term in the above estimation is
$e^{(1-2s)\ell (\log\ell)^{1/\gamma}}$. Thus for any $s>1/2$ the product is uniformly bounded and we have the Hausdorff dimension of $B(\gamma,1)$ is not greater than $1/2$. Then we can conclude $\dim_HF(\gamma,1) \leq 1/2$ and the proof is completed.
\end{proof}

\section{Generalizations}

In this section we consider after \cite{JR} certain infinite
iterated function systems that are natural generalizations of the Gauss map. For each $n \in \N$, let $f_n : [0, 1] \to [0, 1]$
be $C^1$ maps such that
\begin{itemize}
\item[(1)] there exists $m\in \N$ and $0 < A < 1$ such that for all $(a_1, . . . , a_m) \in \N^m$ and for all $x \in [0, 1]$
\[
0 < |(f_{a_1} \circ \cdots \circ f_{a_m})'(x)| \leq A < 1,
\]
\item[(2)] for any $i, j \in \N$ \  $f_i((0, 1)) \cap f_j((0, 1)) = \emptyset$,
\item[(3)] there exists $d > 1$ such that for any $\varepsilon > 0$ there exist $C_1(\varepsilon),C_2(\varepsilon)>0$ such that for $i \in \N$ there exist constants $\xi_i, \lambda_i$ such that for all $x \in [0, 1]$ \  $\xi_i \leq |f_i'(x)| \leq \lambda_i$ and
\[
\frac {C_1} {i^{d+\varepsilon}} \leq \xi_i \leq \lambda_i \leq \frac {C_2} {i^{d-\varepsilon}}.
\]
\end{itemize}
We will call such an iterated function system a {\it $d$-decaying system}. It will be further called {\it Gauss like} if
\[
\bigcup_{i=1}^\infty f_i([0,1]) = [0,1)
\]
and if for all $x\in [0,1]$ we have that $f_i(x) < f_j(x)$ implies $i<j$.

We have a natural projection $\Pi: \N^\N \to [0,1]$ defined by
\[
\Pi(\underline{a}) = \lim_{n\to \infty} f_{a_1}\circ \cdots \circ f_{a_n}(1),
\]
which gives for any point $x\in [0,1]$ its symbolic expansion $(a_1(x), a_2(x), \ldots)$. This expansion is not uniquely defined, but there are only countably many points with more than one symbolic expansions.

For a $d$-decaying Gauss like system we consider $S_n(x)=\sum_1^n a_i(x)$. Given an increasing function $\varphi: \N\to \N$ we denote
\[
E_d(\varphi) = \left\{x\in (0,1): \lim_{n\to\infty} \frac {S_n(x)} {\varphi(n)} =1 \right\}.
\]

\begin{thm}
Let $\{f_i\}$ be a $d$-decaying Gauss like system. We have
\begin{itemize}
\item[i)] if $\varphi(n) = e^{n^\gamma}$ with $\gamma < 1/d$,
\[
\dim_H E_d(\varphi) =1,
\]
\item[ii)] if $\varphi(n) = e^{n^\gamma}$ with $\gamma > 1/d$,
\[
\dim_H E_d(\varphi) =\frac 1d,
\]
\item[iii)] if $\varphi(n) = e^{\gamma^n}$ with $\gamma >1$,
\[
\dim_H E_d(\varphi) = \frac 1 {\gamma + d-1}.
\]
\end{itemize}
\end{thm}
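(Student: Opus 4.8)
The plan is to rerun the arguments of Sections~2 and~3, together with Section~4 of \cite{WX11} (used for Theorem~\ref{main-2}) and the argument of \cite{Xu}, with the exponent $2$ of the Gauss map --- which enters only through the length estimate \eqref{length-estimate} --- replaced by $d$. The one new ingredient is the $d$-decaying version of \eqref{length-estimate}: since $|I_n(a_1,\dots,a_n)|=|(f_{a_1}\circ\cdots\circ f_{a_n})'(\theta)|=\prod_{k=1}^n|f_{a_k}'(\theta_k)|$ for suitable points, condition~(3) gives, for every $\epsilon>0$,
\[
C_1^{\,n}\prod_{k=1}^n a_k^{-d-\epsilon}\ \le\ |I_n(a_1,\dots,a_n)|\ \le\ C_2^{\,n}\prod_{k=1}^n a_k^{-d+\epsilon},
\]
and the factors $C_i^{\,n}$, $\prod a_k^{\pm\epsilon}$ are negligible against the main term in every estimate we use, just as $\exp(\epsilon\sum_1^n j^\gamma)$ and $\exp(cn\log n)$ were in the proof of Lemma~\ref{lem:bigb}. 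Consequently Lemma~\ref{estimation-zeta} holds with $2s$ replaced by $ds$ (the induction is unchanged, and only needs $ds>1$, i.e.\ $s>1/d$, so that $\zeta(ds)<\infty$), and Lemmas~\ref{lem:biga}--\ref{lem:bigb} hold with $1/2$ replaced by $1/d$: a prescribed-growth set is covered by $\approx e^{\sum_1^n j^\gamma}$ rank-$n$ intervals of length $\approx e^{-d\sum_1^n j^\gamma}$, and the cluster/Frostman measure of Lemma~\ref{lem:biga} has lower local dimension $(-\sum_1^{n-1}j^\gamma)/(-d\sum_1^n j^\gamma+n^\gamma)\to 1/d$.

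For part~(ii) the lower bound then comes, exactly as in the proof of Theorem~\ref{main}, from embedding $B(\gamma,c_1,c_2,N)$ with $c_1(n)=(e^{n^\gamma}-e^{(n-1)^\gamma})e^{-n^\gamma}$, $c_2(n)=\tfrac{n+1}{n}c_1(n)$ into $E_d(\varphi)$. For the upper bound I would repeat the proof of Theorem~\ref{main}: along $n_k=k^{1/\gamma}$ the block sums lie in intervals $D_\ell$ with $|D_\ell|\asymp\min D_\ell\asymp e^\ell$, and the product structure together with the $ds$-version of Lemma~\ref{estimation-zeta} gives, for any $s>1/d$,
\[
\sum_{I_{n_k}\subset A(\gamma,k,1)}|I_{n_k}|^s\ \le\ \prod_{\ell=1}^k r_1e^\ell\cdot\Big(\tfrac{9}{2}\big(2+\zeta(ds)\big)\Big)^{n_\ell-n_{\ell-1}}\cdot r_2^{ds}e^{-ds\,\ell},
\]
where $n_\ell-n_{\ell-1}\approx\ell^{1/\gamma-1}$ with $1/\gamma-1<1$, so the term $e^{(1-ds)\ell}$ dominates and the product is bounded; the boundary value $\gamma=1/d$ (the analogue of the $\gamma=1/2$ subcase) is organised with the $L$-optimisation of \eqref{eq}, with $2s$ replaced by $ds$, using that $\zeta(ds)\to\infty$ as $s\downarrow1/d$.

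For part~(i) the upper bound is trivial since $E_d(\varphi)\subset[0,1]$; for the lower bound I would follow the proof of Theorem~\ref{main-2}: fix a large $M$, force the digits at a sparse increasing sequence $n_k$ to be just large enough that $S_{n_k}\asymp\varphi(n_k)$, let the remaining digits range over $\{1,\dots,M\}$, and delete the special digits to get a $(1+\epsilon)$-bi-Hölder map onto the limit set $E_M$ of the $d$-decaying subsystem $\{f_1,\dots,f_M\}$; via the length estimate above the bi-Hölder property reduces to $r(n)/n\to0$ and $\log(a_{n_1}\cdots a_{n_{r(n)}})/n\to0$, which the verification in the proof of Theorem~\ref{main-2} supplies in this regime, and since $\dim_H E_M\to1$ as $M\to\infty$ we get $\dim_H E_d(\varphi)=1$. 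Part~(iii) is the $d$-version of \cite{Xu}: for $\varphi(n)=e^{\gamma^n}$ with $\gamma>1$ a point of $E_d(\varphi)$ has $a_n\asymp\varphi(n)$ for all large $n$, so $E_d(\varphi)$ is covered by clusters --- the rank-$n$ intervals sharing $a_1,\dots,a_{n-1}$ with $a_n$ in an interval $D_n$ of width $\asymp\epsilon e^{\gamma^n}$ --- of length $\asymp|I_{n-1}|\sum_{m\in D_n}m^{-d}$; matching this cover against the uniform measure on the clusters pins $\dim_H E_d(\varphi)$ at $1/(\gamma+d-1)$.

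The bookkeeping is routine once one decides to carry $d$ through; the delicate points are: that the $\epsilon$-slack in condition~(3) really does get absorbed into negligible error factors in each of the three estimates; the handling of the boundary exponent $\gamma=1/d$, where the pole of $\zeta$ at $1$ is what forces the bound down to $1/d$; and, in part~(iii), getting the competition in the cluster length between the $e^{-d\sum_1^{n-1}\gamma^j}$ coming from the fixed initial block and the $e^{(1-d)\gamma^n}$ coming from $\sum_{m\in D_n}m^{-d}$ exactly right, so that the cover and the measure both return $1/(\gamma+d-1)$ --- this last point is where I would expect the main effort to go.
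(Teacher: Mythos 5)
Your plan is exactly what the paper intends --- its entire ``proof'' of this theorem is the single sentence that the arguments of Section 3 and of \cite{WX11, Xu} go through without significant changes --- and your $d$-decaying length estimate and the $ds$-version of Lemma~\ref{estimation-zeta} are correct. The gap is that when one actually carries $d$ through, the thresholds do \emph{not} transform as $2\mapsto d$, and your write-up silently assumes they do. In part (ii) you write ``$n_\ell-n_{\ell-1}\approx\ell^{1/\gamma-1}$ with $1/\gamma-1<1$, so $e^{(1-ds)\ell}$ dominates''; but $1/\gamma-1<1$ is equivalent to $\gamma>1/2$, not to $\gamma>1/d$. For $d>2$ and $\gamma\in(1/d,1/2)$ the factor $\bigl(\frac{9}{2}(2+\zeta(ds))\bigr)^{\ell^{1/\gamma-1}}$ is superexponential in $\ell$ and the product diverges for every $s$; no choice of subsequence repairs this, since the block sums must grow by a definite multiplicative factor at each step, which forces $k\lesssim n_k^\gamma$ and hence $\sum_{\ell\le k}n_\ell^\gamma\lesssim n_k^{2\gamma}$, and this beats the combinatorial term $n_k\log K$ only when $\gamma\ge1/2$. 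Symmetrically, in part (i) the near-Lipschitz condition \eqref{condition-digits} amounts to $\sum_{k\le r(n)}\log a_{n_k}\approx(\log\varphi(n))^2\approx n^{2\gamma}=o(n)$, i.e.\ again $\gamma<1/2$ independently of $d$; for $d<2$ and $\gamma\in(1/2,1/d)$ the deletion map is only H\"older with exponent bounded away from $1$. So the threshold your two arguments actually establish is $\gamma=1/2$ for every $d$ (the relevant ``$1/2$''s coincide with $1/d$ only at $d=2$), and in the range between $\min(1/2,1/d)$ and $\max(1/2,1/d)$ the two halves of your proof contradict each other: the construction of (i) gives dimension $1$ where (ii) claims $1/d$, and the cover of (ii) gives $\le 1/d$ where (i) claims $1$. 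This is arguably a defect of the statement itself, but your proposal does not detect it and would ``verify'' it anyway.

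Part (iii) has the same problem. You correctly isolate the competition between $e^{-d\sum_1^{n-1}\gamma^j}$ and the cluster width $\sum_{m\in D_n}m^{-d}\asymp e^{-(d-1)\gamma^n}$ as the delicate point, but you then assert without computing that it returns $1/(\gamma+d-1)$. Carrying it out: the critical radius is $r_n\asymp\exp\bigl(-d\sum_{j=1}^{n-1}\gamma^j-(d-1)\gamma^n\bigr)$ with $\mu(B(x,r_n))\asymp\exp\bigl(-\sum_{j=1}^{n-1}\gamma^j\bigr)$, and since $\sum_{j=1}^{n-1}\gamma^j\sim\gamma^n/(\gamma-1)$ the exponent ratio converges to $1/\bigl(d+(d-1)(\gamma-1)\bigr)=1/\bigl((d-1)\gamma+1\bigr)$, which equals $1/(\gamma+d-1)$ only at $d=2$. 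In short, the proposal is the right skeleton, but as written it papers over the three places where ``replace $2$ by $d$'' fails; a correct execution forces the threshold $1/2$ in (i)--(ii) and the value $1/((d-1)\gamma+1)$ in (iii), so you should either restrict to $d=2$ or prove (and state) the corrected theorem.
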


The proofs (both from Section 3 and from \cite{WX11, Xu}) go through without significant changes.

\bibliographystyle{alpha}

%
%

\end{document}